\documentclass[10pt,reqno,11pt]{amsart}%
\usepackage{mathrsfs}
\usepackage{amsfonts}
\usepackage{amssymb}
\usepackage{amsmath}
\usepackage{amsthm}
\usepackage{graphicx}%
\usepackage{xcolor}%
\setcounter{MaxMatrixCols}{30}
\providecommand{\U}[1]{\protect\rule{.1in}{.1in}}
\oddsidemargin 0pt \evensidemargin 0pt \textwidth 160mm \textheight
220mm
\newtheorem{theorem}{Theorem}[section]

\newtheorem{lemma}[theorem]{Lemma}

\newtheorem{conjecture}[theorem]{Conjecture}

\theoremstyle{definition}
\theoremstyle{remark}
\numberwithin{equation}{section}

\ifx\pdfoutput\relax\let\pdfoutput=\undefined\fi
\newcount\msipdfoutput
\ifx\pdfoutput\undefined\else
\ifcase\pdfoutput\else
\msipdfoutput=1
\ifx\paperwidth\undefined\else
\ifdim\paperheight=0pt\relax\else\pdfpageheight\paperheight\fi
\ifdim\paperwidth=0pt\relax\else\pdfpagewidth\paperwidth\fi
\fi\fi\fi

\begin{document}

\begin{center}
{\LARGE \textbf{Convergence analysis of a Pad\'{e} family of iterations for the matrix sector function }}{\normalsize \footnote{
*Corresponding author
\par
 $^{\dag}$  Institute of Applied Mathematics, 7 Radio Street, Vladivostok, 690041, Russia (dmkrp@yandex.ru).
 \par
$^{\ddag}$ Department of Mathematics and Statistics, University of Regina, Regina,
S4S 0A2, Canada (mlin87@ymail.com)}}

\bigskip

\bigskip{\large \textbf{Dmitrii B. Karp}}$^{ \dag}${\large \textbf{,}}
{\large \textbf{Minghua Lin}}$^{\ddag, *}$

\bigskip
\end{center}

\begin{quotation}
\noindent\textbf{Abstract} The main purpose of this paper is to
give a solution to a conjecture concerning a Pad\'{e} family of
iterations for the matrix sector function that was recently raised
by B. Laszkiewicz et al in [A Pad\'{e} family of iterations for
the matrix sector function and the matrix $p$th root, Numer.
Linear Algebra Appl. 2009; 16:951-970]. Using a sharpened version
Schwarz's lemma, we also demonstrate a strengthening of the
conjecture.

\noindent\textbf{Keywords: } matrix sector function; Pad\'{e}
approximation; rational matrix iteration; hypergeometric identity
\newline\noindent{\small \textbf{2000 AMS subject classifications}
65F30; 33C05}
\end{quotation}

\section{\textbf{Introduction}}

Let $p\ge 2$ be an integer. The matrix sector function was introduced in \cite{tsa} as a generalization of the matrix sign function. The matrix sector function of $A$ can be defined as
\begin{eqnarray*} sect_p(A)= A(A^p)^{-1/p}\end{eqnarray*}
where $(A^p)^{1/p}$ is the principal $p$th root (see \cite{hig1})
of the matrix $A^p$. For $p=2$ the matrix sector function is the
matrix sign function \cite{ken1}. In his 2008 book \cite{hig1} Nicholas
Higham remarked on page 49 that ``a good numerical method for
computing the matrix sector function is currently lacking''.

Let $k$ and $m$ be non-negative integers. A Pad\'{e} approximant $[k/m]$ to the complex scalar function
$f (z)$ is a rational function of the form $P_{km}(z)/Q_{km}(z)$, where $P_{km}$ and $Q_{km}$ are polynomials of degree less than or equal to $k$ and $m$, respectively, $Q_{km}(0)=1$ and
\begin{eqnarray*}
f(z)-\frac{P_{km}(z)}{Q_{km}(z)}=O(z^{k+m+1})
~\text{as}~z\to{0}.
\end{eqnarray*}
 For given $k$ and $m$ if the Pad\'{e} approximant $[k/m]$ exists, then it is unique. It is usually required
that $P_{km}$ and $Q_{km}$ have no common zeros, so that  $P_{km}$
and $Q_{km}$ are unique (see \cite[p.79]{hig1}). Detailed
account of the theory of Pad\'{e} approximation can be found in
\cite{bak}.

Let now $P_{km}(z)/Q_{km}(z)$ be the Pad\'{e} approximant $[k/m]$
of the function $(1-z)^{-1/p}$. Since
$(1-z)^{-1/p}={_2F_1}(1/p,1;1;z)$, where ${_2F_1}$ is the Gauss
hypergeometric function, we may apply the formulas for the
Pad\'{e} approximants to this function from \cite[p.65]{baker} or
\cite{wb} yielding
\begin{eqnarray}&&
  Q_{km}(z)=\sum\limits_{j=0}^m\frac{(-m)_j(-\frac{1}{p}-k)_j}{j!(-k-m)_j}z^j={_2F_1}(-m,-\frac{1}{p}-k; -k-m; z),\nonumber
\\&& \label{e0}
P_{km}(z)=\sum\limits_{j=0}^k\frac{(-k)_j(\frac{1}{p}-m)_j}{j!(-k-m)_j}z^j={_2F_1}(-k,-m+\frac{1}{p};
-k-m; z),
\end{eqnarray}
where we assume $k\ge m-1\ge 0$ are integers and
$(\alpha)_k=\alpha\cdot(\alpha+1)\cdots(\alpha+k-1),
~(\alpha)_0=1$. We will rederive this formulas in Lemma~\ref{lm1}
below simultaneously finding the approximation error.

The scalar Pad\'{e} iteration for the function
$\frac{\lambda}{\sqrt[p]{\lambda^p}}$, where $\sqrt[p]{}$ denotes
the principal $p$th root, corresponding to the Pad\'{e}
approximant $[k/m]$, has the form
\begin{eqnarray}\label{e1}x_{l+1}=h_{km}(x_l):=
x_l\frac{P_{km}(1-x_l^p)}{Q_{km}(1-x_l^p)}, ~~x_0=\lambda.
\end{eqnarray}

The swap between the scalar $p$-sector function and the $p$th root
holds also in matrix settings and therefore from the  Pad\'{e}
family for the matrix sector function, one obtains a family of
iterations for the matrix $p$th root \cite[formula (36)]{las1}
 \begin{eqnarray*}
 X_{l+1}=X_lP_{km}(I-A^{-1}X_l^p)Q_{km}(I-A^{-1}X_l^p)^{-1}, ~~X_0=I.
 \end{eqnarray*}
Computation of matrix $p$th root has aroused considerable interest
recently, see for example \cite{bini,guo1,guo2,inna1,inna2}.

  The following is a sample of iteration functions $h_{km}(x)$ from the scalar Pad\'{e} family (\ref{e1}):
\begin{eqnarray*} &&h_{00}=x, \qquad h_{01}=\frac{px}{x^p+(p-1)}\\&&
   h_{10}=\frac{x}{p}[-x^p+(1+p)], ~~~~h_{11}=x\frac{(p-1)x^p+(p+1)}{(p+1)x^p+(p-1)}\\&&h_{02}=\frac{2p^2x}{(-p+1)x^{2p}+(4p-2)x^p+(2p^2-3p+1)}\\&&
 h_{12}=\frac{2px[(2p-1)x^p+(p+1)]}{(p+1)x^{2p}+(4p^2+2p-2)x^p+(2p^2-3p+1)}\\&&
 h_{20}=\frac{x}{2p^2}[(p+1)x^{2p}-(4p+2)x^p+(2p^2+3p+1)] \\&&
 h_{12}=\frac{x}{2p}\frac{(-p+1)x^{2p}+x^p(4p^2-2p-2)+(2p^2+3p+1)}{(2p+1)x^p+(p-1)}\\&&
   h_{22}=\frac{x[(2p^2-3p+1)x^{2p}+(8p^2-2)x^p+(2p^2+3p+1)]}{(2p^2+3p+1)x^{2p}+(8p^2-2)x^p+(2p^2-3p+1)}.
\end{eqnarray*}

The Pad\'{e} approximant $[1/1]$ provides the Halley method for the sector function that
was considered in \cite{koc, las}.
B. Laszkiewicz et al \cite{las1} have proved the following relation for $h_{01}$.

\begin{theorem}\label{th1}
 Let $x_0=\lambda$ lie in the region $L_p^{ (Pad\acute{e})}:=\{z\in\mathbb{C}: \mid1-z^p\mid<1\}$
 and  let the sequence $\{x_l\}$ be  generated  $h_{01}$. Then
 \begin{eqnarray*}|1-x^p_l |\le |1-x^ p_0 |^{2^l}.
\end{eqnarray*}
\end{theorem}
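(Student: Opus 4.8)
The plan is to collapse the scalar iteration into a single closed recursion for the quantity $y_l:=1-x_l^p$ and then establish a sharp one–step estimate $|y_{l+1}|\le|y_l|^2$ valid on the whole closed unit disc. Since $h_{01}(x)=px/(x^p+p-1)$, raising to the $p$th power gives $x_{l+1}^p=p^p x_l^p/(x_l^p+p-1)^p$, and substituting $x_l^p=1-y_l$ yields
\[
y_{l+1}=1-x_{l+1}^p=1-\frac{p^p(1-y_l)}{(p-y_l)^p}=:F(y_l).
\]
This recursion is genuinely closed (the right side depends on $x_l$ only through $x_l^p$), and it is well defined whenever $|y_l|<1$, because then $|p-y_l|\ge p-1\ge 1\neq 0$.

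Next I would expand $F$ in a Taylor series about the origin and read off the signs of the coefficients. Using the binomial series $(p-y)^{-p}=p^{-p}\sum_{n\ge 0}c_n y^n$ with $c_n=\binom{n+p-1}{p-1}p^{-n}$ (convergent for $|y|<p$, hence on $\overline{\mathbb D}$), one gets $p^p(1-y)(p-y)^{-p}=\sum_{n\ge 0}(c_n-c_{n-1})y^n$ with $c_{-1}:=0$, so, since $c_0=1$,
\[
F(y)=-\sum_{n\ge 1}(c_n-c_{n-1})\,y^n .
\]
A short computation using $c_n/c_{n-1}=(n+p-1)/(pn)$ gives $-(c_n-c_{n-1})=\dfrac{(p-1)(n-1)}{n\,p^{\,n}}\binom{n+p-2}{p-1}\ge 0$, and this coefficient vanishes for $n=1$ (and trivially for $n=0$). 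Hence $F(y)=\sum_{n\ge 2}a_n y^n$ with all $a_n\ge 0$, so that $F(y)=y^2G(y)$ where $G(y)=\sum_{m\ge 0}a_{m+2}y^m$ again has nonnegative Taylor coefficients.

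The key estimate now comes for free. For $|y|\le 1$,
\[
|G(y)|\le\sum_{m\ge 0}a_{m+2}|y|^m\le\sum_{m\ge 0}a_{m+2}=G(1)=F(1)=1,
\]
the last equality because the factor $(1-y)$ in $F$ kills its value at $y=1$ (equivalently, $\sum_{n\ge1}(c_n-c_{n-1})$ telescopes to $-c_0=-1$, since $c_n\to 0$). Therefore $|F(y)|=|y|^2|G(y)|\le|y|^2$ for every $y$ in the closed unit disc. Finally I would close the argument by induction on $l$: by hypothesis $|y_0|=|1-x_0^p|<1$; and if $|y_l|\le|y_0|^{2^l}\,(<1)$, then $y_l$ lies in the open unit disc, the iteration step is legitimate, and $|y_{l+1}|=|F(y_l)|\le|y_l|^2\le|y_0|^{2^{l+1}}$, which is precisely the asserted bound.

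The only real work is the coefficient identity in the second step; once one notices that $F$ has nonnegative Taylor coefficients and a double zero at $0$, the bound $|F(y)|\le|y|^2F(1)=|y|^2$ on $\overline{\mathbb D}$ is automatic and no boundary or trigonometric estimates are required. (One could alternatively invoke the Schwarz lemma applied to $F(y)/y$ after checking that $F$ maps the unit disc into itself, but verifying that self-map property appears to reduce to the same positivity, so the power-series route is the cleanest.)
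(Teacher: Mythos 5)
Your proof is correct. Note first that the paper does not actually prove Theorem~\ref{th1} itself --- it quotes it from \cite{las1} --- and the result is recovered here as the case $k=0$, $m=1$ of the general machinery (Theorem~\ref{th3} plus the Schwarz-lemma step in the proof of Conjecture~\ref{con1}). Your argument is, in substance, that general argument specialized and made fully explicit: your $F(y)$ is exactly the paper's $f_{01}(t)=1-(1-t)p^p(p-t)^{-p}$, and the three facts you use --- $F$ has nonnegative Taylor coefficients, vanishes to order $2$ at the origin, and satisfies $F(1)=1$ --- are precisely what Theorem~\ref{th3} asserts for $f_{km}$ with $k+m+1=2$. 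The difference lies in how the positivity is obtained. The paper derives it for all admissible $(k,m)$ from Stieltjes/Pad\'e theory (Lemma~\ref{lm2}) combined with the hypergeometric identity of Theorem~\ref{th2}, and then invokes Schwarz's lemma; you exploit the explicitness of the $[0/1]$ case and read the coefficients off the binomial series, obtaining $c_{n-1}-c_n=\frac{(p-1)(n-1)}{np^{n}}\binom{n+p-2}{p-1}\ge 0$ by a one-line computation, after which $|F(y)|\le |y|^2$ on the closed unit disc is immediate because the coefficient sum telescopes to $F(1)=1$ (no appeal to Schwarz's lemma is needed). What your route buys is a short, self-contained, completely elementary proof of this particular theorem; what it gives up is generality, since the binomial-series computation does not extend to general $[k/m]$ --- which is exactly why the paper needs Lemmas~\ref{lm1}--\ref{lm2} and Theorem~\ref{th2}. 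The small points are all in order: the series has radius of convergence $p>1$, so termwise estimation on the closed unit disc is legitimate; $|p-y|\ge p-1\ge 1$ keeps the denominator away from zero along the whole iteration; and the induction is set up correctly.
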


They also posed the following conjecture
\cite[Conjecture~4.2]{las1}.

\begin{conjecture}\label{con1}
Let ${x_l}$ be the sequence generated by (\ref{e1}) for $k\ge
m-1$. If $x_0$ lies in the region $L^{(Pad\acute{e})}_p$, then
\begin{eqnarray}\label{co}|1-x^p_l
|\le |1-x^ p_0 |^{(k+m+1)^l}.
\end{eqnarray}
 \end{conjecture}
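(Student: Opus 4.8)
The plan is to reduce the whole statement to Schwarz's lemma after the substitution $t=1-x^{p}$. Under this change of variable the scalar iteration (\ref{e1}) becomes $t_{l+1}=\varphi_{km}(t_{l})$, where
\[
\varphi_{km}(t):=1-(1-t)\left(\frac{P_{km}(t)}{Q_{km}(t)}\right)^{\!p}
\]
is a rational function, and the hypothesis $x_{0}\in L_{p}^{(Pad\acute{e})}$ is exactly $|t_{0}|<1$. Hence it suffices to establish the one-step estimate $|\varphi_{km}(t)|\le|t|^{k+m+1}$ for every $t$ in the open unit disc $\mathbb{D}$: then $|t_{l+1}|=|\varphi_{km}(t_{l})|\le|t_{l}|^{k+m+1}\le\bigl(|t_{0}|^{(k+m+1)^{l}}\bigr)^{k+m+1}=|t_{0}|^{(k+m+1)^{l+1}}$, and since $|t_{0}|<1$ every $t_{l}$ stays in $\mathbb{D}$, so (\ref{co}) follows by induction on $l$. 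To obtain the one-step estimate I would verify the hypotheses of the generalized Schwarz lemma for $\varphi_{km}$ on $\mathbb{D}$, namely holomorphy, the bound $|\varphi_{km}|\le1$, and vanishing at the origin to order at least $k+m+1$; applying the lemma to $\varphi_{km}(t)/t^{k+m+1}$, which is then holomorphic and, by the maximum principle, bounded by $1$ on $\mathbb{D}$, yields precisely $|\varphi_{km}(t)|\le|t|^{k+m+1}$.

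The order of vanishing is the soft point. Writing $(1-t)(P_{km}/Q_{km})^{p}=\bigl((1-t)^{1/p}P_{km}/Q_{km}\bigr)^{p}=\bigl(1-(1-t)^{1/p}E_{km}(t)\bigr)^{p}$, where $E_{km}(t):=(1-t)^{-1/p}-P_{km}(t)/Q_{km}(t)$ is the Pad\'{e} error, the defining property of the $[k/m]$ approximant --- made quantitative in \lemref{lm1} --- gives $E_{km}(t)=O(t^{k+m+1})$ with nonzero leading coefficient, whence $\varphi_{km}(t)=p\,(1-t)^{1/p}E_{km}(t)\bigl(1+O(t^{k+m+1})\bigr)$ vanishes at $0$ to order exactly $k+m+1$. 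Holomorphy of $\varphi_{km}$ on $\mathbb{D}$, in fact on $\overline{\mathbb{D}}$, amounts to $Q_{km}$ having no zeros there, and the relevant structural fact is that for $0<1/p<1$ the function $(1-z)^{-1/p}$ is a Stieltjes function, $(1-z)^{-1/p}=\int_{0}^{1}d\mu(\tau)/(1-z\tau)$ with $\mu\ge0$ supported on $[0,1]$; by the classical theory of Pad\'{e} approximation of Stieltjes functions, for $k\ge m-1$ the denominator $Q_{km}$ has $m$ simple real zeros, all lying strictly beyond the singularity, i.e.\ in $(1,\infty)$ --- this is exactly where the hypothesis $k\ge m-1$ enters. (Alternatively one may invoke a zero-location result for the hypergeometric polynomial ${}_{2}F_{1}(-m,-\tfrac{1}{p}-k;-k-m;z)$ of (\ref{e0}).)

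The remaining hypothesis, $|\varphi_{km}|\le1$ on $\mathbb{D}$, is the crux and, I expect, the main obstacle. Since $\varphi_{km}$ is continuous on $\overline{\mathbb{D}}$, by the maximum principle it is enough to bound it on the circle $|t|=1$; but the bound is sharp, since $\varphi_{km}(t)\to1$ as $t\to1$, reflecting the fixed point $x=0$ of $h_{km}$, so no crude estimate will work. My plan is to combine the explicit shape of $E_{km}$ from \lemref{lm1}, which displays it as $t^{k+m+1}$ times an explicitly signed ratio of hypergeometric quantities, with the reciprocity $P_{km}(z)/Q_{km}(z)=1/\bigl(\text{the }[m/k]\text{ Pad\'{e} approximant of }(1-z)^{1/p}\bigr)$ and with the monotonicity and positivity of Pad\'{e} approximants of Stieltjes functions, so as to confine the argument and modulus of $(1-t)^{1/p}P_{km}(t)/Q_{km}(t)$ to the region that forces the inequality. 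The factorization
\[
\varphi_{km}(t)=\prod_{j=0}^{p-1}\left(1-\omega^{j}(1-t)^{1/p}\frac{P_{km}(t)}{Q_{km}(t)}\right),\qquad \omega=e^{2\pi i/p},
\]
in which the $j=0$ factor equals $(1-t)^{1/p}E_{km}(t)$, should reduce the whole estimate to a statement about the individual factors. For $p=2$, $k=0$, $m=1$ this must recover Theorem~\ref{th1}. Finally, replacing the generalized Schwarz lemma by a sharpened version that also accounts for the leading coefficient of $\varphi_{km}$ improves the exponent $(k+m+1)^{l}$ and produces the strengthening announced in the abstract.
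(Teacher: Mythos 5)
Your reduction is exactly the paper's: set $t=1-x^p$, observe that one step of the iteration sends $t$ to $f_{km}(t)=1-(1-t)\bigl(P_{km}(t)/Q_{km}(t)\bigr)^p$, check that $f_{km}$ is holomorphic on the closed disc (poles of $P_{km}/Q_{km}$ in $(1,\infty)$ by the Stieltjes theory, which is where $k\ge m-1$ enters), vanishes at $0$ to order $k+m+1$ (from the Pad\'e error term of \lemref{lm1}), is bounded by $1$ in modulus, and then invoke Schwarz's lemma and induct. But you have correctly identified the crux --- the bound $|f_{km}|\le 1$ on $|t|=1$ --- and you have not proved it; what you offer is a plan. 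That plan, as stated, is unlikely to close: in the factorization $f_{km}(t)=1-\prod_{j}\bigl(\omega^j(1-t)^{1/p}P_{km}/Q_{km}\bigr)$ rewritten as a product of the factors $1-\omega^j(1-t)^{1/p}P_{km}/Q_{km}$, the individual factors for $j\ne 0$ are \emph{not} bounded by $1$ on the unit circle (their moduli approach $2$ wherever $(1-t)^{1/p}P_{km}/Q_{km}$ comes close to $-\omega^{-j}$), and $(1-t)^{1/p}$ has a branch point at $t=1$ lying on the circle, so a factor-by-factor estimate cannot give $|f_{km}|\le1$; the bound is genuinely a statement about the product as a whole.

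The paper's resolution of this step is the part your proposal is missing, and it is not a routine estimate. One shows that \emph{all Taylor coefficients of $f_{km}$ are non-negative}, from which $\max_{|t|=1}|f_{km}(t)|=f_{km}(1)=1$ follows by the triangle inequality. The positivity is obtained by differentiating: a Legendre/Elliott-type identity for products of contiguous hypergeometric functions (Theorem~\ref{th2}), specialized to $a=-m$, $b=-k-\tfrac1p$, $c=-k-m$ and combined with the explicit error formula of \lemref{lm1}, collapses the Wronskian-like combination $P_{km}Q_{km}+p(t-1)(P_{km}'Q_{km}-P_{km}Q_{km}')$ to a single monomial $C\,t^{k+m}$ with $C>0$, giving
\[
f_{km}'(t)=\left(\frac{P_{km}(t)}{Q_{km}(t)}\right)^{p-1}\frac{C\,t^{k+m}}{Q_{km}(t)^2},
\]
whose coefficient positivity then follows from the partial-fraction structure of the Stieltjes--Pad\'e approximant (\lemref{lm2}). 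This simultaneously delivers the order of vanishing and the modulus bound. Without this (or some substitute for the global bound on the circle), your argument does not yet constitute a proof; the rest of your outline, including the sharpened Schwarz lemma remark at the end, matches the paper.
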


 The main purpose of this paper is to demonstrate the validity of this conjecture.

 \section{\textbf{Main Results}}

 We start with some useful lemmas.

\begin{lemma}\label{lm1}
  The following relation holds true:
 \begin{eqnarray}\label{e2}
 \frac{P_{km}(t)}{Q_{km}(t)}=(1-t)^{-\frac{1}{p}}-
 \frac{k!m!(\frac{1}{p})_{k+1}(1-\frac{1}{p})_m}{(k+m)!(k+m+1)!}t^{k+m+1} \frac{R_{km}(t)}{Q_{km}(t)},
 \end{eqnarray}
 where $R_{km}={_2F_1}\left(\begin{matrix}m+1, k+\frac{1}{p}+1\\m+k+2\end{matrix} \vline\; t \right)$.
\end{lemma}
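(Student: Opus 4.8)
The plan is to establish (\ref{e2}) first as a power-series identity near $t=0$ --- by computing the Taylor coefficients of $Q_{km}(t)(1-t)^{-1/p}$ and matching them term by term --- and then to divide through by $Q_{km}$, which is legitimate because $Q_{km}(0)=1$. First, using $(1-t)^{-1/p}={_2F_1}(\frac{1}{p},1;1;t)=\sum_{i\ge0}\frac{(1/p)_i}{i!}t^i$ and the series for $Q_{km}$ in (\ref{e0}), I would write the coefficient of $t^n$ in the product as
\begin{equation*}
c_n=\sum_{j=0}^{m}\frac{(-m)_j(-\frac{1}{p}-k)_j}{j!\,(-k-m)_j}\cdot\frac{(1/p)_{n-j}}{(n-j)!},
\end{equation*}
and then rewrite the last factor by means of the elementary Pochhammer identities $(n-j+1)_j=(-1)^j(-n)_j$ and $(\frac{1}{p}+n-j)_j=(-1)^j(1-\frac{1}{p}-n)_j$, which together give $\frac{(1/p)_{n-j}}{(n-j)!}=\frac{(1/p)_n}{n!}\cdot\frac{(-n)_j}{(1-\frac{1}{p}-n)_j}$. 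This turns the coefficient into a single terminating hypergeometric series evaluated at unity,
\begin{equation*}
c_n=\frac{(1/p)_n}{n!}\;{_3F_2}\left(-m,\ -\tfrac{1}{p}-k,\ -n;\ -k-m,\ 1-\tfrac{1}{p}-n;\ 1\right).
\end{equation*}

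The key step will be to observe that this ${_3F_2}$ is not only terminating (the numerator parameter $-m$ is a nonpositive integer) but Saalsch\"utzian, since $1+(-m)+(-\frac{1}{p}-k)+(-n)=(-k-m)+(1-\frac{1}{p}-n)$. The Pfaff--Saalsch\"utz summation theorem then applies with $N=m$ and produces the closed form
\begin{equation*}
c_n=\frac{(1/p)_n}{n!}\cdot\frac{(\frac{1}{p}-m)_m\,(n-k-m)_m}{(-k-m)_m\,(\frac{1}{p}+n-m)_m}.
\end{equation*}
Along the way I would check, using the standing hypothesis $k\ge m-1\ge0$, that no denominator Pochhammer symbol here vanishes.

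It then remains to read (\ref{e2}) off from this closed form by inspecting the factor $(n-k-m)_m=(n-k-m)(n-k-m+1)\cdots(n-k-1)$, a product of $m$ consecutive integers. For $0\le n\le k$, a short rearrangement (telescoping $(1/p)_n$ and $(n-k-m)_m$ into factorials) identifies $c_n$ with $\frac{(-k)_n(\frac{1}{p}-m)_n}{n!\,(-k-m)_n}$, the $t^n$-coefficient of $P_{km}$; for $k+1\le n\le k+m$ one of the $m$ consecutive integers equals $0$, so $c_n=0$; and for $n=k+m+1+r$ with $r\ge0$ we have $(n-k-m)_m=(r+1)_m$, and after cancelling $(\frac{1}{p}+n-m)_m$ against part of $(1/p)_n$ and using $(\frac{1}{p}-m)_m=(-1)^m(1-\frac{1}{p})_m$ and $(-k-m)_m=(-1)^m(k+m)!/k!$ one obtains
\begin{equation*}
c_{k+m+1+r}=\frac{k!\,m!\,(\frac{1}{p})_{k+1}(1-\frac{1}{p})_m}{(k+m)!\,(k+m+1)!}\cdot\frac{(m+1)_r\,(k+\frac{1}{p}+1)_r}{r!\,(k+m+2)_r},
\end{equation*}
which is exactly the constant appearing in (\ref{e2}) times the $r$-th Taylor coefficient of $R_{km}={_2F_1}(m+1,k+\frac{1}{p}+1;k+m+2;t)$. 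Summing over $n$ gives
\begin{equation*}
Q_{km}(t)(1-t)^{-1/p}=P_{km}(t)+\frac{k!\,m!\,(\frac{1}{p})_{k+1}(1-\frac{1}{p})_m}{(k+m)!\,(k+m+1)!}\,t^{k+m+1}R_{km}(t)
\end{equation*}
for $|t|<1$ (the series defining $R_{km}$ converges there because $k+m+2$ is a positive integer); dividing by $Q_{km}(t)$ yields (\ref{e2}), which extends by analytic continuation.

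I expect the whole argument to be essentially mechanical once the product coefficient is recognised as a balanced ${_3F_2}$ at unity; the single real ingredient is the Pfaff--Saalsch\"utz theorem, and the main labour --- and the place where slips are easiest --- is the three Pochhammer simplifications above together with the accompanying non-vanishing checks. One could instead simply quote the known expressions (\ref{e0}) for $P_{km}$ and $Q_{km}$ from \cite{baker} and verify only the error term, but the direct computation above delivers both at once.
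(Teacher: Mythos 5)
Your argument is correct, and it reaches \eqref{e2} by a genuinely different route from the paper. The paper also starts from the explicit series for $Q_{km}$ in \eqref{e0}, but instead of multiplying out it invokes Euler's transformation ${_2F_1}(a,b;c;x)=(1-x)^{c-a-b}{_2F_1}(c-a,c-b;c;x)$; since the parameters $-k,-k-m$ are negative integers and the transformation degenerates there, the authors perturb them to $-k-\delta,-k-m-\delta$ and let $\delta\to0$, watching which terms of the left-hand series survive: the first $k+1$ give $P_{km}$, the next $m$ vanish, and the tail assembles into the error term $C\,t^{k+m+1}R_{km}(t)$. You instead compute the Cauchy product $Q_{km}(t)(1-t)^{-1/p}$ coefficient by coefficient, recognize each coefficient as a terminating balanced ${_3F_2}$ at unity, and close it with Pfaff--Saalsch\"utz; the case split on $n$ then reproduces $P_{km}$, the gap of $m$ vanishing coefficients, and the error series, with the constant $\frac{k!m!(\frac{1}{p})_{k+1}(1-\frac{1}{p})_m}{(k+m)!(k+m+1)!}$ coming out correctly (I checked the three Pochhammer reductions and the non-vanishing of $(-k-m)_j$, $(1-\frac{1}{p}-n)_j$, $(\frac{1}{p}+n-m)_m$ under $k\ge m-1\ge0$ and $p\ge2$). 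What each approach buys: the paper's is shorter once one accepts the $\delta$-limit bookkeeping, which is the delicate point there (indeed its displayed intermediate constant drops a factor $(k+m)!$ that reappears only in the final formula); yours is longer but entirely mechanical, avoids the regularization, and the single nontrivial input (Pfaff--Saalsch\"utz) is applied under explicitly verified non-degeneracy conditions --- the one point you rightly flag as needing care, since a second numerator parameter $-n$ may also be a nonpositive integer, but the identity extends by rational dependence on the parameters as long as the denominator Pochhammers you list are nonzero, which they are.
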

\begin{proof}
 In order to find the relation between $P_{km}(t)$ and
$Q_{km}(t)$ we want to apply  Euler's transformation which reads
\cite[formula (2.2.7)]{and}
\begin{eqnarray*}
{_2F_1}(a,b,c;x)=(1-x)^{c-a-b}{_2F_1}(c-a,c-b,c;x).
\end{eqnarray*}
 However, since $P_{km}(t)$, $Q_{km}(t)$ are polynomials, we
cannot use this formula directly.  To find the right
modification, choose any $\delta\in (0,1)$ so that
\begin{eqnarray*}
{_2F_1}(-k-\delta,\frac{1}{p}-m;-k-m-\delta;t)=(1-t)^{-\frac{1}{p}}{_2F_1}(-m,-\frac{1}{p}-k-\delta;-k-m-\delta; t).
\end{eqnarray*}
Take limit $\delta\rightarrow 0$ on both sides. On the right hand
side, we immediately get $(1-t)^{-\frac{1}{p}}Q_{km}(t)$. On the
left we have
\begin{eqnarray*}
&{_2F_1}(-k-\delta,\frac{1}{p}-m;-k-m-\delta;t)=1+\frac{(-k-\delta)(\frac{1}{p}-m)}{-k-m-\delta}t+\\&
\cdots+\frac{(-k-\delta)(-k-\delta+1)\cdots(-\delta)(\frac{1}{p}-m)
\cdots(\frac{1}{p}-m+k)}{(-k-m-\delta)(-k-m-\delta+1)\cdots(-m-\delta)(k+1)!}t^{k+1}
\cdots\\&+\frac{(-k-\delta)(-k-\delta+1)\cdots(-\delta)(-\delta+1)\cdots(-\delta+m)(\frac{1}{p}-m)\cdots
(\frac{1}{p}+k)}{(-k-m-\delta)(-k-m-\delta+1)\cdots(-\delta)(k+m+1)!}t^{k+m+1}\cdots.
\end{eqnarray*}
Taking limits $\delta\rightarrow 0$ we get
\begin{eqnarray*}
&&(1-t)^{-\frac{1}{p}}Q_{km}(t)=P_{km}(t)+(-1)^m\frac{k!m!(\frac{1}{p}-m)_{k+m+1}}{(k+m+1)!}t^{k+m+1}+\cdots\\&&=
P_{km}(t)+(-1)^m\frac{k!m!(\frac{1}{p}-m)_{k+m+1}}{(k+m+1)!}t^{k+m+1}{_2F_1}
\left(\begin{matrix}m+1,k+\frac{1}{p}+1 \\m+k+2\end{matrix}
\vline\; t \right).
\end{eqnarray*}
By using
$(\frac{1}{p}-m)_{k+m+1}=(-1)^m(\frac{1}{p})_{k+1}(1-\frac{1}{p})_m$
we obtain the identity (\ref{e2}).
\end{proof}

According to Euler's integral representation of the Gauss
hypergeometric function \cite[Theorem~2.2.1]{and} and Euler's
reflection formula $\Gamma(z)\Gamma(1-z)=\pi/\sin(\pi{z})$
\cite[Theorem~1.2.1]{and} we have
\begin{eqnarray}\label{e3}
(1-t)^{-\frac{1}{p}}={_2F_1}(1/p,1;1;t)=\frac{\sin(\pi/p)}{\pi}\int_0^1\frac{u^{\frac{1}{p}-1}(1-u)^{-\frac{1}{p}}}{1-tu}du.
\end{eqnarray}

\begin{lemma}\label{lm2}
All the Taylor coefficients of $\frac{P_{km}(t)}{Q_{km}(t)}$ are
positive for $k\ge m-1\ge 0$.
 \end{lemma}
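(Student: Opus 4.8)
The plan is to read off the Taylor coefficients of $P_{km}(t)/Q_{km}(t)$ from the identity~(\ref{e2}) of Lemma~\ref{lm1}. Rearranging that identity gives
\begin{equation*}
\frac{P_{km}(t)}{Q_{km}(t)}=(1-t)^{-\frac1p}-\frac{k!\,m!\,(\tfrac1p)_{k+1}(1-\tfrac1p)_m}{(k+m)!\,(k+m+1)!}\,t^{k+m+1}\,\frac{R_{km}(t)}{Q_{km}(t)},
\end{equation*}
so the first $k+m+1$ Taylor coefficients of $P_{km}/Q_{km}$ coincide with those of $(1-t)^{-1/p}$, namely $(\tfrac1p)_j/j!>0$, and are therefore positive. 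It remains to handle the coefficients of order $\ge k+m+1$, which are governed by the correction term. First I would observe that the scalar prefactor is positive: since $0<1/p\le 1/2$, all the Pochhammer factors $(\tfrac1p)_{k+1}$ and $(1-\tfrac1p)_m$ are products of positive numbers, and the factorials are positive, so the whole constant $\tfrac{k!\,m!\,(1/p)_{k+1}(1-1/p)_m}{(k+m)!\,(k+m+1)!}$ is positive. Hence the sign of the higher-order coefficients is controlled by $-R_{km}(t)/Q_{km}(t)$, and the claim reduces to showing that $-R_{km}(t)/Q_{km}(t)$ has nonnegative (in fact positive) Taylor coefficients.

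Next I would combine two facts. One: $R_{km}(t)={_2F_1}(m+1,k+\tfrac1p+1;m+k+2;t)$ manifestly has positive Taylor coefficients, because every parameter in the numerator and denominator is positive, so $R_{km}$ is a power series with positive coefficients. Two: the reciprocal $1/Q_{km}(t)$ has positive Taylor coefficients. To see the second point I would use Euler's integral representation. From~(\ref{e3}) applied with the correction identity, or more directly by invoking the formula for the remainder, one gets that $1/Q_{km}(t)=(1-t)^{1/p}P_{km}(t)+(\text{positive constant})\,t^{k+m+1}R_{km}(t)/Q_{km}(t)$; alternatively, and more cleanly, one can write $Q_{km}(t)={_2F_1}(-m,-\tfrac1p-k;-k-m;t)$ and use the known Pad\'e-denominator integral
\begin{equation*}
\frac{1}{Q_{km}(t)}=c_{km}\int_0^1 \frac{u^{?}(1-u)^{?}}{(1-tu)^{?}}\,d\mu(u),
\end{equation*}
i.e. represent $1/Q_{km}$ as a Stieltjes-type transform with a positive measure, which forces all its Taylor coefficients $\int u^j\,d\mu(u)$ to be positive. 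Then the product of two power series with positive coefficients has positive coefficients, so $R_{km}(t)/Q_{km}(t)$ does, and multiplying by the positive constant and by $t^{k+m+1}$ preserves positivity; finally, since the low-order part is also positive, every Taylor coefficient of $P_{km}/Q_{km}$ is positive.

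The main obstacle I anticipate is establishing cleanly that $1/Q_{km}(t)$ has positive Taylor coefficients on the nose, since $Q_{km}$ is only a polynomial and its reciprocal need not obviously be a positive series from the bare coefficient formula in~(\ref{e0}) (the coefficients of $Q_{km}$ itself alternate or mix in sign depending on $p$). The hypergeometric/Euler-integral route is the way around this: one wants a representation of $1/Q_{km}$ — or equivalently of $(1-t)^{1/p}-P_{km}(t)/Q_{km}(t)$ times an appropriate power of $t$ — as a transform against a positive measure, analogous to~(\ref{e3}). I expect this to follow from the contiguous-function machinery for ${_2F_1}$ together with the parameter ranges $k\ge m-1\ge 0$ and $0<1/p\le 1/2$, which are exactly the hypotheses of the lemma; the inequality $k\ge m-1$ is presumably what guarantees the relevant Beta-function weights are integrable (nonnegative exponents), and hence that the measure is genuinely positive.
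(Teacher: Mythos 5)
Your route is genuinely different from the paper's, but it breaks down at the decisive step because of a sign error. From (\ref{e2}) the Taylor coefficient of $P_{km}/Q_{km}$ of order $j\ge k+m+1$ is
\[
\frac{(1/p)_j}{j!}\;-\;C\cdot\bigl[t^j\bigr]\Bigl(t^{k+m+1}\,\tfrac{R_{km}(t)}{Q_{km}(t)}\Bigr),
\qquad C=\frac{k!\,m!\,(\tfrac1p)_{k+1}(1-\tfrac1p)_m}{(k+m)!\,(k+m+1)!}>0 .
\]
You correctly note at the outset that what you would need is positivity of the coefficients of $-R_{km}/Q_{km}$, but what you then actually argue (positivity of the coefficients of $R_{km}$ and of $1/Q_{km}$, hence of their product) establishes positivity of the coefficients of $+R_{km}/Q_{km}$. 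That makes the correction term a series whose coefficients are \emph{negative} from order $k+m+1$ on, so your concluding sentence asserts that a difference of two positive series is positive --- a non sequitur. And this is not a fixable bookkeeping slip: $R_{km}/Q_{km}$ genuinely does have positive coefficients (all parameters of $R_{km}$ are positive, and the zeros of $Q_{km}$ lie in $(1,\infty)$), so the correction really subtracts, and the lemma is equivalent to the quantitative statement that the subtracted amount never exceeds $(1/p)_j/j!$. Nothing in your plan addresses that inequality. A secondary gap is that the positivity of the coefficients of $1/Q_{km}$ is only conjectured via an unspecified Stieltjes-type integral; it is true, but proving it already requires the location of the zeros of $Q_{km}$, which is exactly the Pad\'e--Stieltjes input you are trying to avoid.

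For comparison, the paper sidesteps the error formula entirely here: using (\ref{e3}) it observes that $(1-t)^{-1/p}$ is a Stieltjes function and invokes the standard theory of Pad\'e approximants to Stieltjes functions (Baker--Graves-Morris, Theorem 5.2.1) for $k\ge m-1$, obtaining $P_{km}/Q_{km}=R(t)+\sum_{j=1}^{m}\lambda_j/(1-a_jt)$ with $\lambda_j>0$ and $0<a_j<1$. Each summand $\lambda_j/(1-a_jt)$ has positive Taylor coefficients, and the finitely many low-order coefficients possibly affected by the polynomial $R$ coincide with those of $(1-t)^{-1/p}$ by the Pad\'e order condition --- which is your (correct) first observation. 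If you wish to salvage your own approach you must prove the coefficientwise inequality displayed above, which is essentially the lemma itself.
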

\begin{proof} First we note that  representation (\ref{e3}) implies that the function
$(1-t)^{-\frac{1}{p}}$, which is defined in the whole complex
plane except a cut $[1,\infty)$  on the positive real axis,
is a Stieltjes function \cite[formula (5.1.1)]{bak}. Since we have
required $k\ge m-1$, then according to \cite[Theorem
5.2.1]{bak} $\frac{P_{km}(t)}{Q_{km}(t)}$ has simple positive
poles lying in $(1,\infty)$  with positive residues. Summing
up, we have
\begin{eqnarray*}\frac{P_{km}(t)}{Q_{km}(t)}=R(t)+\sum\limits_{j=1}^m\frac{\lambda_j}{1-a_jt}\end{eqnarray*}
 with $\lambda_j>0, 0<a_j<1$ and $R(t)$ is a polynomial of degree $k-m$ (or zero if $k=m-1$).
 This formula makes it clear that all power series coefficients of $\frac{P_{km}(t)}{Q_{km}(t)}$  are positive possibly
except the the first $k-m+1$ influenced by the polynomial $R(t)$.
But those are also positive as we have proved in
Lemma~\ref{lm1}.\end{proof}

We next present an identity which might be of interest in its own
right.
\begin{theorem}\label{th2}
The following identity holds true:
\begin{multline}\label{e4}
\left(t(a+b-1)-c+1\right){_2F_1}\!\left(\!\begin{array}{l}a,b\\c\end{array}\!\vline ~~t\right)
{_2F_1}\!\left(\!\begin{array}{l}1-a,1-b\\2-c\end{array}\!\vline ~~t\!\right)
\\
+\frac{t(1-t)(1-a)(1-b)}{2-c}{_2F_1}\!\left(\!\begin{array}{l}a,b\\c\end{array}\!\vline ~~t\right)
{_2F_1}\!\left(\!\begin{array}{l}2-a,2-b\\3-c\end{array}\!\vline ~~t\right)
\\
-\frac{t(1-t)ab}{c}{_2F_1}\!\left(\!\begin{array}{l}a+1,b+1\\c+1\end{array}\!\vline ~~t\right)
{_2F_1}\!\left(\!\begin{array}{l}1-a,1-b\\2-c\end{array}\!\vline ~~t\right)=1-c.
\end{multline}\end{theorem}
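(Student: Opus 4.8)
The plan is to prove the identity \eqref{e4} by viewing the left-hand side as a function of $t$ and showing it is constant. First I would check that the stated value $1-c$ is correct by evaluating at $t=0$: every ${_2F_1}$ equals $1$ there, the two factors carrying an explicit factor $t(1-t)$ vanish, and the first term reduces to $(-c+1)\cdot 1\cdot 1 = 1-c$. So it suffices to show that the derivative of the left-hand side with respect to $t$ is identically zero on a neighbourhood of the origin, whence the identity holds as an identity of formal power series (and then on the common domain of convergence by analytic continuation).

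The natural tool is the contiguous relations and the hypergeometric differential equation. Write $F = {_2F_1}(a,b;c;t)$, $G = {_2F_1}(1-a,1-b;2-c;t)$, and note that $G$ is the second companion solution: the product $FG$ and the Wronskian-type combinations of $F$, $G$ and their derivatives satisfy simple first-order relations. Specifically, using $\frac{d}{dt}{_2F_1}(a,b;c;t) = \frac{ab}{c}{_2F_1}(a+1,b+1;c+1;t)$, the second and third terms of \eqref{e4} are exactly $t(1-t)$ times $F'G$ and $-t(1-t)$ times $FG'$ up to the contiguous shifts hidden in the parameters $2-a,2-b;3-c$; indeed ${_2F_1}(2-a,2-b;3-c;t)$ is a constant multiple of the derivative of $G$. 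Thus the left-hand side can be rewritten in the schematic form $\bigl(t(a+b-1)-c+1\bigr)FG + t(1-t)(F'G - FG')$, i.e. $\bigl(t(a+b-1)-c+1\bigr)FG + t(1-t)W$, where $W = F'G - FG'$ is the Wronskian of the two solutions. The Wronskian of the hypergeometric equation $t(1-t)y'' + (c-(a+b+1)t)y' - ab\,y = 0$ is classical: $W(t) = W(0)\, t^{-c}(1-t)^{c-a-b-1}$, and here $W(0)$ is determined by the leading behaviour of $F$ and $G$ near $0$. Differentiating $\bigl(t(a+b-1)-c+1\bigr)FG + t(1-t)W$ and substituting $W' = \bigl(\frac{(a+b+1)t-c}{t(1-t)}\bigr)W$ together with $(FG)' = F'G + FG'$ should make everything cancel, leaving $0$.

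The one genuine obstacle is bookkeeping: one must correctly identify the constant relating ${_2F_1}(2-a,2-b;3-c;t)$ to $G'(t)$ and to the derivative of ${_2F_1}(1-a,1-b;2-c;t)$, and likewise normalize $W(0)$ so that the coefficient $\frac{(1-a)(1-b)}{2-c}$ appearing in \eqref{e4} comes out right. Concretely, $G'(t) = \frac{(1-a)(1-b)}{2-c}{_2F_1}(2-a,2-b;3-c;t)$, so the second term in \eqref{e4} is precisely $t(1-t)\,F\,G'$, and the third term is $-t(1-t)\,F'\,G$ after recognizing that $\frac{ab}{c}{_2F_1}(a+1,b+1;c+1;t) = F'$. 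Hence the left-hand side is exactly $\bigl(t(a+b-1)-c+1\bigr)FG - t(1-t)(F'G - FG') = \bigl(t(a+b-1)-c+1\bigr)FG - t(1-t)W$ with $W = F'G - FG'$; matching the sign conventions is the only place care is needed. Once the expression is in this closed form, the proof is a two-line computation: differentiate, use the Wronskian ODE $\frac{d}{dt}\bigl(t(1-t)W\bigr) = \bigl(1-c + (a+b-1)t\bigr)W$ — which one obtains directly by eliminating $F''$ and $G''$ via the hypergeometric equation — and observe it equals $\frac{d}{dt}\bigl[\bigl(t(a+b-1)-c+1\bigr)FG\bigr]$ after using $(FG)' = F'G+FG'$ and the ODE once more. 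An alternative, purely formal route, in case the Wronskian manipulation is delicate for the reader, is to expand all six hypergeometric series, collect the coefficient of $t^n$ on the left, and verify it vanishes for $n\ge 1$ using the Chu–Vandermonde sum; I would mention this as a remark but carry out the Wronskian argument as the main proof since it is shorter and more transparent.
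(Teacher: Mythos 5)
Your overall strategy is the right one and is in fact the paper's: after recognizing $F'=\frac{ab}{c}\,{_2F_1}(a+1,b+1;c+1;t)$ and $G'=\frac{(1-a)(1-b)}{2-c}\,{_2F_1}(2-a,2-b;3-c;t)$, the left-hand side of \eqref{e4} becomes $\Phi(t)=(t(a+b-1)-c+1)FG+t(1-t)(FG'-F'G)$, one checks $\Phi(0)=1-c$, and one shows $\Phi'\equiv 0$ by eliminating $F''$ and $G''$ through hypergeometric differential equations. However, the way you propose to carry out that last step contains a genuine error. The function $G={_2F_1}(1-a,1-b;2-c;t)$ is \emph{not} a second solution of the hypergeometric equation satisfied by $F$ (that companion solution is $t^{1-c}{_2F_1}(a-c+1,b-c+1;2-c;t)$); $G$ solves a \emph{different} equation, namely $t(1-t)G''+(2-c-(3-a-b)t)G'-(1-a)(1-b)G=0$. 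Consequently $W=F'G-FG'$ is not the Wronskian of a single ODE: Abel's formula $W=W(0)t^{-c}(1-t)^{c-a-b-1}$ does not apply (it is already inconsistent with $W(0)=ab/c-(1-a)(1-b)/(2-c)$ being finite and generically nonzero), the substitution $W'=\frac{(a+b+1)t-c}{t(1-t)}W$ is invalid, and the displayed relation $\frac{d}{dt}\bigl(t(1-t)W\bigr)=\bigl(1-c+(a+b-1)t\bigr)W$ is false; its right-hand side does not equal $\frac{d}{dt}\bigl[(t(a+b-1)-c+1)FG\bigr]$, so as written the argument does not close.

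The repair is small and turns your sketch into essentially the paper's proof. Substituting $t(1-t)F''=-(c-(a+b+1)t)F'+abF$ and $t(1-t)G''=-(2-c-(3-a-b)t)G'+(1-a)(1-b)G$ into $\frac{d}{dt}\bigl(t(1-t)W\bigr)=(1-2t)W+t(1-t)(F''G-FG'')$ gives
\begin{equation*}
\frac{d}{dt}\bigl(t(1-t)W\bigr)=\bigl(1-c+(a+b-1)t\bigr)(F'G+FG')+\bigl(ab-(1-a)(1-b)\bigr)FG
=\frac{d}{dt}\Bigl[\bigl(1-c+(a+b-1)t\bigr)FG\Bigr],
\end{equation*}
using $ab-(1-a)(1-b)=a+b-1$; note that the correct right-hand side involves $(FG)'$ plus an extra $FG$ term, not $W$. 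Hence $t(1-t)W-\bigl(1-c+(a+b-1)t\bigr)FG$ is constant, equal to $-(1-c)$ at $t=0$, which is exactly \eqref{e4}. With this correction your computation coincides with the paper's verification that $\Phi'(t)=FG\bigl(-ab+(1-a)(1-b)+(a+b-1)\bigr)=0$.
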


\begin{proof} Write
\[
F(t)={_2F_1}\!\left(\!\begin{array}{l}a,b\\c\end{array}\!\vline ~~t\right),~~~
G(t)={_2F_1}\!\left(\!\begin{array}{l}1-a,1-b\\2-c\end{array}\!\vline ~~t\!\right).
\]
Then identity (\ref{e4}) takes the form
\[
\Phi(t):=(t(a+b-1)-c+1)FG+t(1-t)[FG'-GF']=1-c.
\]
Since $F(0)=G(0)=1$ all we need to prove is $\Phi'(t)=0$.
Differentiation yields:
\[
\Phi'(t)=(a+b-1)FG+(2-c-(3-a-b)t)FG'-(c-(a+b+1)t)F'G+t(1-t)FG''-t(1-t)GF''.
\]
The hypergeometric differential equation reads:
\[
t(1-t)F''+(c-(a+b+1)t)F'-abF=0
\]
for $F$ and
\[
t(1-t)G''+(2-c-(3-a-b)t)G'-(1-a)(1-b)G=0
\]
for $G$.  Simple rearrangement of the expression for $\Phi'(t)$
then gives:
\begin{multline*}
\Phi'(t)=-G[t(1-t)F''+(c-(a+b+1)t)F'-abF]-abFG
\\
+F[t(1-t)G''+(2-c-(3-a-b)t)G'-(1-a)(1-b)G]+(1-a)(1-b)FG+(a+b-1)FG
\\
=FG(-ab+(1-a)(1-b)+(a+b-1))=0.
\end{multline*}\end{proof}

\textbf{Remark.} Identity (\ref{e4}) is related to several
well-known formulas for hypergeometric functions such as
Legendre's identity, Elliott's identity and
Anderson-Vamanamurthy-Vuorinen's identity. See \cite{bnpv} for
details.

Now taking $a=-m$, $b=-k-\frac{1}{p}$, $c=-k-m$ in the identity
(\ref{e4}) we get
\begin{multline*}
(p(k+m+1)(1-t)-t)Q_{km}(t)R_{km}(t) +pt(1-t)Q_{km}(t)R_{km}'(t)
-pt(1-t)Q_{km}'(t)R_{km}(t)\\=p(k+m+1).
\end{multline*}
Rewriting formula (\ref{e2}) from Lemma 2.1 in the form
 \begin{eqnarray}\label{e5}
 P_{km}(t)
=(1-t)^{-\frac{1}{p}}Q_{km}(t)-\frac{k!m!(\frac{1}{p})_{k+1}(1-\frac{1}{p})_m}{(k+m)!(k+m+1)!}t^{k+m+1}R_{km}(t)
\end{eqnarray}
and substituting (\ref{e5}) for $P_{km}(t)$ we obtain after some
simple algebra
\begin{multline*}
P_{km}(t)Q_{km}(t)+p(t-1)[P_{km}'(t)Q_{km}(t)-P_{km}(t)Q_{km}'(t)]
\\
=\frac{k!m!(\frac{1}{p})_{k+1}(1-\frac{1}{p})_m}{(k+m)!(k+m+1)!}t^{k+m}
\times \\ \left\{(p(k+m+1)(1-t)-t)Q_{km}(t)R_{km}(t)
+pt(1-t)Q_{km}(t)R_{km}'(t) -pt(1-t)Q_{km}'(t)R_{km}(t)\right\}
\\
=\frac{pk!m!(\frac{1}{p})_{k+1}(1-\frac{1}{p})_m}{[(k+m)!]^2}t^{k+m}.
\end{multline*}

This leads to the following statement.

\begin{theorem}\label{th3}
Suppose $k\ge m-1 \ge 0$ and let
\begin{eqnarray}\label{e6}
   f_{km}(t)= 1-(1-t)\left(\frac{P_{km}(t)}{Q_{km}(t)}\right)^p:=\sum_{i=0}^\infty c_{km,i}t^i,
\end{eqnarray}
Then
\begin{eqnarray*}
c_{km,0}=\cdots= c_{km,k+m}=0,~~ c_{km,i}>0 ~\hbox{for}~i\ge k+m+1.
\end{eqnarray*}
\end{theorem}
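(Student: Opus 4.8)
The plan is to split the claim into two halves: the vanishing of the low-order coefficients, and the positivity of the remaining ones. For the vanishing part, I would start from the defining formula \eqref{e6}, namely $f_{km}(t)=1-(1-t)(P_{km}/Q_{km})^p$, and use Lemma~\ref{lm1} (in the form \eqref{e5}) which gives $P_{km}/Q_{km}=(1-t)^{-1/p}-C\,t^{k+m+1}R_{km}(t)/Q_{km}(t)$ with $C=k!m!(\tfrac1p)_{k+1}(1-\tfrac1p)_m/[(k+m)!(k+m+1)!]$. Raising this to the $p$th power and multiplying by $(1-t)$, the leading term $(1-t)\cdot(1-t)^{-1}=1$ cancels the $1$ in \eqref{e6}, while every other term in the binomial expansion carries a factor $t^{k+m+1}$ (since $R_{km}(0)=Q_{km}(0)=1$, the correction term is $O(t^{k+m+1})$). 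Hence $f_{km}(t)=O(t^{k+m+1})$, which is exactly $c_{km,0}=\cdots=c_{km,k+m}=0$.

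For the positivity of $c_{km,i}$ when $i\ge k+m+1$, the key is the algebraic identity derived in the paragraph just before the theorem statement, which I will use in the compact form
\[
P_{km}(t)Q_{km}(t)+p(t-1)\bigl[P_{km}'(t)Q_{km}(t)-P_{km}(t)Q_{km}'(t)\bigr]=\frac{p\,k!\,m!\,(\tfrac1p)_{k+1}(1-\tfrac1p)_m}{[(k+m)!]^2}\,t^{k+m}.
\]
I would differentiate \eqref{e6}: writing $\varphi(t)=P_{km}(t)/Q_{km}(t)$, we have $f_{km}(t)=1-(1-t)\varphi(t)^p$, so $f_{km}'(t)=\varphi(t)^p-p(1-t)\varphi(t)^{p-1}\varphi'(t)=\varphi(t)^{p-1}\bigl[\varphi(t)+p(t-1)\varphi'(t)\bigr]$. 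Now $\varphi'(t)=(P_{km}'Q_{km}-P_{km}Q_{km}')/Q_{km}^2$, so $\varphi+p(t-1)\varphi'=\bigl[P_{km}Q_{km}+p(t-1)(P_{km}'Q_{km}-P_{km}Q_{km}')\bigr]/Q_{km}^2$, and the bracket is precisely the left-hand side of the identity above. Therefore
\[
f_{km}'(t)=\frac{p\,k!\,m!\,(\tfrac1p)_{k+1}(1-\tfrac1p)_m}{[(k+m)!]^2}\;t^{k+m}\,\frac{P_{km}(t)^{p-1}}{Q_{km}(t)^{p+1}}.
\]

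It remains to show the right-hand side has nonnegative Taylor coefficients. The scalar prefactor $p\,k!\,m!\,(\tfrac1p)_{k+1}(1-\tfrac1p)_m/[(k+m)!]^2$ is positive since $1<p$ means $0<\tfrac1p<1$, so all the Pochhammer factors are positive. The factor $t^{k+m}$ only shifts indices. For the rational factor, I would argue that $P_{km}(t)^{p-1}/Q_{km}(t)^{p+1}$ has all Taylor coefficients positive: by Lemma~\ref{lm2}, $\varphi(t)=P_{km}/Q_{km}$ has all positive Taylor coefficients, hence so does $\varphi(t)^{p-1}$ (products of series with nonnegative coefficients have nonnegative coefficients; and the constant term is $1>0$); and $1/Q_{km}(t)^{2}=\bigl(\varphi(t)/P_{km}(t)\bigr)^2$ — but more directly, from the partial-fraction decomposition $\varphi(t)=R(t)+\sum_j \lambda_j/(1-a_jt)$ established in the proof of Lemma~\ref{lm2}, and the fact that $Q_{km}(t)=\prod_j(1-a_jt)$ with $0<a_j<1$, we get $1/Q_{km}(t)^{p+1}=\prod_j(1-a_jt)^{-(p+1)}$, each factor being a series with positive coefficients; multiplying by $P_{km}(t)^{p-1}$, whose coefficients are positive for low orders by Lemma~\ref{lm1} and nonnegative overall, keeps all coefficients nonnegative. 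Thus $f_{km}'(t)$ has all Taylor coefficients nonnegative, and in fact strictly positive starting from the $t^{k+m}$ term (the constant term of $t^{k+m}P_{km}^{p-1}/Q_{km}^{p+1}$ is $1$). Integrating, $c_{km,i}$ is a positive multiple of the $(i-1)$st coefficient of $f_{km}'$, hence $c_{km,i}>0$ for $i\ge k+m+1$.

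The main obstacle I anticipate is bookkeeping rather than conceptual: one must verify carefully that the identity quoted before the theorem really does reduce $f_{km}'$ to the stated monomial-times-rational form (in particular that no extraneous factors of $(1-t)$ survive), and that "positive coefficients" is genuinely preserved through the $p$th-power and reciprocal operations — the cleanest route for the latter is the partial-fraction/Stieltjes description from Lemma~\ref{lm2} together with the elementary fact that $(1-at)^{-\alpha}$ has positive coefficients for $a>0$, $\alpha>0$. Once those two points are nailed down, the positivity of $c_{km,i}$ for $i\ge k+m+1$ and the vanishing for $i\le k+m$ both follow immediately.
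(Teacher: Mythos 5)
Your overall strategy is exactly the paper's: differentiate $f_{km}$, use the Wronskian-type identity derived from Theorem~\ref{th2} to reduce $f_{km}'$ to $C\,t^{k+m}P_{km}^{p-1}/Q_{km}^{p+1}$, and then invoke Lemma~\ref{lm2} for positivity (the paper gets the vanishing of $c_{km,0},\dots,c_{km,k+m}$ from this same derivative formula plus $f_{km}(0)=0$, whereas you get it directly from Lemma~\ref{lm1}; both are fine).

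There is, however, one genuinely false sub-claim in your positivity argument: you assert that $P_{km}(t)^{p-1}$ has ``coefficients positive for low orders by Lemma~\ref{lm1} and nonnegative overall.'' Lemma~\ref{lm1} says nothing of the sort, and the claim fails already for $k=m=1$, where $P_{11}(t)={_2F_1}(-1,\tfrac1p-1;-2;t)=1-\tfrac{p-1}{2p}\,t$ has a negative linear coefficient; so the route ``$1/Q_{km}^{p+1}$ has positive coefficients, multiply by $P_{km}^{p-1}$'' does not go through. Fortunately the other decomposition you mention in the same sentence is the correct one and should be promoted to the main argument: write $P_{km}^{p-1}/Q_{km}^{p+1}=\varphi^{p-1}\cdot Q_{km}^{-2}$ with $\varphi=P_{km}/Q_{km}$; then $\varphi^{p-1}$ has strictly positive Taylor coefficients by Lemma~\ref{lm2}, and $Q_{km}(t)=\prod_j(1-a_jt)$ with $0<a_j<1$ (all zeros of $Q_{km}$ lie in $(1,\infty)$ by the Stieltjes theory quoted in Lemma~\ref{lm2}) gives $Q_{km}^{-2}=\prod_j(1-a_jt)^{-2}$, a series with positive coefficients and constant term $1$. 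The product then has strictly positive coefficients, and the rest of your integration argument is correct. With that one repair the proof is complete and coincides with the paper's.
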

\begin{proof}  It is easy to see  $f_{km}(0)=0$, so the constant term $c_{km,0}=0$.
The desired conclusion follows from the observation that
\begin{eqnarray}
f_{km}'(t)&=&
\left(\frac{P_{km}(t)}{Q_{km}(t)}\right)^{p-1}\cdot\frac{1}{(Q_{km}(t))^2}\cdot
 [P_{km}(t)Q_{km}(t)+p(t-1)(P_{km}'(t)Q_{km}(t)-P_{km}(t)Q_{km}'(t))]\nonumber
\\\label{eq:fprime}&=&\left(\frac{P_{km}(t)}{Q_{km}(t)}\right)^{p-1}\cdot\frac{1}{(Q_{km}(t))^2}\cdot\frac{pk!m!(\frac{1}{p})_{k+1}(1-\frac{1}{p})_m}{[(k+m)!]^2}t^{k+m}
\end{eqnarray}
by the formula preceding this theorem.  The expression on the
right has positive Taylor coefficients starting from the term
$t^{k+m}$ by Lemma~\ref{lm2}.
\end{proof}
\textbf{Remark.}
The case [1/1] has been proved in \cite{lin} using a different approach.

Now we are in the position to give a proof to
Conjecture~\ref{con1}
\begin{proof}
Let $x_0\in L_p^{(Pad\acute{e})}$ and let $t=1-x_0^p$. Then
$|t|<1$. From (\ref{e1}) we obtain
\begin{eqnarray*}
1-x_1^p=1-x_0^p\left(\frac{P_{km}(1-x_0^p)}{Q_{km}(1-x_0^p)}\right)^p
=1-(1-t)\left(\frac{P_{km}(t)}{Q_{km}(t)}\right)^p=f_{km}(t).
\end{eqnarray*}

  Since $f_{km}(t)$ has non-negative Taylor coefficients by
Theorem~\ref{th2} it is clear that
\[
\max\limits_{|t|=1}|f_{km}(t)|=f_{km}(1)=1.
\]
Then by Schwarz's lemma \cite[Theorem~5.4.3]{bak} for all $x_0\in
L_p^{(Pad\acute{e})}$
\[
|1-x_1^p|=|f_{km}(1-x_0^p)|<|1-x_0^p|^{k+m+1}
\]
 so that by an induction argument we see that  (\ref{co}) holds.
 This completes the proof.
\end{proof}

In the next theorem we show that the speed of convergence is in
fact even higher then conjectured in \cite{las1} and proved above.
Hence we have a strengthening of the Conjecture~\ref{con1}.

\begin{theorem}\label{th4}
Let ${x_l}$ be the sequence generated by (\ref{e1}) for $k\ge
m-1$. If $x_0\in{L^{(Pad\acute{e})}_p}$, then
\begin{eqnarray}\label{eq:strong}
\label{ea}|1-x^p_l|\le
|1-x^p_0|^{(k+m+1)^l}\left(\frac{|1-x_0^p|+\alpha}{1+\alpha|1-x_0^p|}\right)^{((k+m+1)^{l}-1)/(k+m)},
\end{eqnarray}
where
\[
\alpha=\frac{pk!m!(\frac{1}{p})_{k+1}(1-\frac{1}{p})_m}{(k+m)!(k+m+1)!}<1.
\]
\end{theorem}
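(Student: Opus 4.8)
The plan is to sharpen the Schwarz-lemma step used above for Conjecture~\ref{con1} by retaining one more Taylor coefficient of $f_{km}$. By Theorem~\ref{th3}, $f_{km}(t)=\sum_{i\ge k+m+1}c_{km,i}t^{i}$ vanishes to order exactly $N:=k+m+1$ at the origin and all later coefficients are positive. Since by Lemma~\ref{lm2} the poles of $P_{km}/Q_{km}$ lie in $(1,\infty)$, the function $f_{km}$ is analytic in a neighbourhood of $\overline{\mathbb D}$, so its Taylor series converges there and $\sum_{i\ge N}c_{km,i}=f_{km}(1)=1$. Hence $\psi(z):=f_{km}(z)/z^{N}=\sum_{i\ge0}c_{km,N+i}z^{i}$ is analytic on the unit disc, has non-negative coefficients summing to $1$ — so $|\psi(z)|\le1$ there — and $\psi(0)=c_{km,N}$. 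Reading the leading coefficient of $f_{km}'$ off \eqref{eq:fprime} at $t=0$ (where $P_{km}(0)=Q_{km}(0)=1$) and integrating gives $c_{km,N}=\dfrac{pk!m!(\frac1p)_{k+1}(1-\frac1p)_{m}}{(k+m)!(k+m+1)!}=\alpha$; since also $c_{km,N+1}>0$ and the coefficients of $\psi$ sum to $1$, we obtain $\alpha<1$.

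Next I would apply the sharpened form of Schwarz's lemma, the Schwarz--Pick inequality, to $\psi$. From $\psi(0)=\alpha$ it gives $\bigl|\tfrac{\psi(z)-\alpha}{1-\alpha\psi(z)}\bigr|\le|z|$, i.e.\ $\psi(z)=\tfrac{\zeta+\alpha}{1+\alpha\zeta}$ for some $\zeta$ with $|\zeta|\le|z|$; since $\alpha$ is real and lies in $[0,1)$, the modulus of the right-hand side over $|\zeta|\le|z|$ is largest at $\zeta=|z|$, so
\[
|\psi(z)|\le\frac{|z|+\alpha}{1+\alpha|z|}\qquad\Longrightarrow\qquad|f_{km}(z)|\le|z|^{k+m+1}\,\frac{|z|+\alpha}{1+\alpha|z|}\quad(|z|<1).
\]
This is the one-step estimate that replaces the coarser bound $|f_{km}(z)|\le|z|^{k+m+1}$ used for the conjecture.

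Finally I would run the induction on the iteration. Write $t_{l}:=|1-x_{l}^{p}|$; the identity $1-x_{l+1}^{p}=f_{km}(1-x_{l}^{p})$ from the proof of Conjecture~\ref{con1}, together with the one-step estimate, yields $t_{l+1}\le t_{l}^{N}\varphi(t_{l})$, where $\varphi(t):=\tfrac{t+\alpha}{1+\alpha t}$ is increasing on $[0,1]$ and less than $1$ on $[0,1)$. Since $x_{0}\in L_{p}^{(Pad\acute{e})}$ gives $t_{0}<1$, a one-line induction shows $t_{l+1}\le t_{l}^{N}\varphi(t_{l})\le t_{l}$, so $t_{l}\le t_{0}$ and hence $\varphi(t_{l})\le\varphi(t_{0})=:\beta$ for every $l$. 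Setting $s_{l}:=(N^{l}-1)/(N-1)=1+N+\cdots+N^{l-1}$, so that $s_{l+1}=1+Ns_{l}$, I prove $t_{l}\le t_{0}^{\,N^{l}}\beta^{\,s_{l}}$ by induction: the case $l=0$ is trivial ($s_{0}=0$), and if it holds for $l$ then
\[
t_{l+1}\le t_{l}^{N}\varphi(t_{l})\le t_{l}^{N}\beta\le\bigl(t_{0}^{\,N^{l}}\beta^{\,s_{l}}\bigr)^{N}\beta=t_{0}^{\,N^{l+1}}\beta^{\,s_{l+1}}.
\]
Substituting $N=k+m+1$ and $\beta=\tfrac{|1-x_{0}^{p}|+\alpha}{1+\alpha|1-x_{0}^{p}|}$ turns this into \eqref{eq:strong}.

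The only genuinely delicate point is the first: establishing that $f_{km}$ has non-negative Taylor coefficients summing to exactly $1$, so that $\psi$ maps the disc into its closure and $\psi(0)$ equals the claimed $\alpha$; this combines Theorem~\ref{th3} with the pole location furnished by Lemma~\ref{lm2}. Once the one-step bound $|f_{km}(z)|\le|z|^{k+m+1}\frac{|z|+\alpha}{1+\alpha|z|}$ is in hand, the remaining induction on the iteration is routine.
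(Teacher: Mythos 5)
Your proof is correct and follows essentially the same route as the paper: where you rederive the sharpened one-step bound $|f_{km}(z)|\le |z|^{k+m+1}\,\frac{|z|+\alpha}{1+\alpha|z|}$ by applying Schwarz--Pick to $f_{km}(z)/z^{k+m+1}$, the paper simply cites Osserman's boundary Schwarz lemma \cite[Lemma~2]{oss} with $\alpha$ equal to the first non-zero Taylor coefficient, read off from \eqref{eq:fprime} exactly as you do. Your closed induction with $s_{l+1}=1+(k+m+1)s_l$ is just a tidier packaging of the paper's unrolled chain of inequalities, which likewise uses $t_l\le t_0$ and the monotonicity of $t\mapsto(t+\alpha)/(1+\alpha t)$.
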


\begin{proof}

Follow the proof of Conjecture~\ref{con1} up to application of
Schwarz's lemma. Let $\alpha$ be the first non-zero coefficient of
$f_{km}$.  Then apply \cite[Lemma~2]{oss} yielding ($t_l=1-x_l^p$)
\begin{multline*}
|t_l|=|f_{km}(t_{l-1})|<|t_{l-1}|^{k+m+1}\frac{|t_{l-1}|+\alpha}{1+\alpha|t_{l-1}|}
<\left(|t_{l-2}|^{k+m+1}\frac{|t_{l-2}|+\alpha}{1+\alpha|t_{l-2}|}\right)^{k+m+1}\frac{|t_{l-1}|+\alpha}{1+\alpha|t_{l-1}|}
\\
<\left(|t_{l-3}|^{k+m+1}\frac{|t_{l-3}|+\alpha}{1+\alpha|t_{l-3}|}\right)^{(k+m+1)^2}
\left(\frac{|t_{l-2}|+\alpha}{1+\alpha|t_{l-2}|}\right)^{k+m+1}\frac{|t_{l-1}|+\alpha}{1+\alpha|t_{l-1}|}<\cdots
\\
<|t_0|^{(k+m+1)^{l}}\left(\frac{|t_{0}|+\alpha}{1+\alpha|t_{0}|}\right)^{(k+m+1)^{l-1}}
\left(\frac{|t_{1}|+\alpha}{1+\alpha|t_{1}|}\right)^{(k+m+1)^{l-2}}\cdots
\left(\frac{|t_{l-1}|+\alpha}{1+\alpha|t_{l-1}|}\right)
\\
<|t_0|^{(k+m+1)^{l}}\left(\frac{|t_{0}|+\alpha}{1+\alpha|t_{0}|}\right)^{(k+m+1)^{l-1}+(k+m+1)^{l-2}+\cdots+(k+m+1)+1},
\end{multline*}
where we have used the monotonicity of
$t\to(t+\alpha)/(1+\alpha{t})$ on $(0,1)$ in the ultimate
inequality. Summing up the geometric progression in the exponent
in the last line gives (\ref{eq:strong}). The value of $\alpha$ is
found from (\ref{eq:fprime}).
\end{proof}
\textbf{Remark.}  The value of $\alpha$ is quite small even for
moderate values of $k$ and $m$. For instance, for the Halley
method $k=m=1$ we have $\alpha=(p+1)(p-1)/(12p^2)$. This shows
that Theorem~\ref{th4} is a substantial improvement over
Conjecture~\ref{con1} as well as over Theorem~\ref{th1}.

\bigskip

\noindent\textbf{Acknowledgements.}  The first named author
acknowledges the financial support of the Russian Basic Research
Fund (grant 11-01-00038-a) and the Far Eastern Branch of the
Russian Academy of Sciences (grant 09-III-A-01-008).  The second
named author would like to thank Dr.\,Fang Ren for encouragement
and inspiration in his mathematical analysis course which was
taken several years ago.

\end{document}